\newcounter{rowcntr}[table]
\renewcommand{\therowcntr}{\arabic{rowcntr}}
\newcolumntype{N}{>{\refstepcounter{rowcntr}\therowcntr}c}
\newcommand{\N}{{\mathbb{N}}}
\newcommand{\R}{{\mathbb{R}}}
\newcommand{\C}{{\mathbb{C}}}
\newcommand{\xx}{{\mathcal{X}}}
\newcommand{\hh}{{\mathcal{H}}}
\newcommand{\jay}{{j}}
\newcommand{\al}{{\alpha}}
\newcommand{\be}{{\beta}}
\newcommand{\ka}{{\kappa}}
\newcommand{\la}{{\lambda}}
\newcommand{\hrho}{{\widehat{\rho}}}
\newcommand{\hT}{{\widehat{T}}}
\newcommand{\hpsi}{{\widehat{\psi}}}
\newcommand{\hhh}{{\widehat{\hh}}}
\newcommand{\hK}{{\widehat{K}}}
\newcommand{\hla}{{\widehat{\la}}}
\newcommand{\hu}{{\widehat{u}}}
\newcommand{\hv}{{\widehat{v}}}
\newcommand{\hS}{{\widehat{S}}}
\newcommand{\Vu}{{\mathbf{u}}}
\newcommand{\VM}{{\mathbf{M}}}
\newcommand{\VK}{{\mathbf{K}}}
\DeclareMathOperator{\supp}{supp}
\DeclareMathOperator{\spn}{span}
\newcommand{\HS}{{\textnormal{HS}}}
\newcommand{\ra}[1]{\renewcommand{\arraystretch}{#1}}
\theoremstyle{definition}
\newtheorem{dfn}{Definition}
\theoremstyle{remark}
\theoremstyle{plain}
\newtheorem{lem}[dfn]{Lemma}
\newtheorem{prop}[dfn]{Proposition}
\newtheorem*{prop*}{Proposition}
\newtheorem{thm}[dfn]{Theorem}
\newtheorem*{thm*}{Theorem}
\begin{document}

\title{Monte Carlo wavelets: a randomized approach to frame discretization}

\author{
 Zeljko Kereta
 \and
  Stefano Vigogna
  \and
  Valeriya Naumova
  \and
  Lorenzo Rosasco
  \and
Ernesto De Vito
}

\date{}

\maketitle

\begin{abstract}
 In this paper we propose  and study a family of continuous wavelets  on general domains, and a corresponding stochastic discretization that we call  Monte Carlo wavelets. 
 First, using tools from the theory of reproducing kernel Hilbert spaces and associated integral operators,  we define a family of continuous wavelets by spectral calculus. 
 Then, we propose a  stochastic discretization based on  Monte Carlo  estimates of integral operators. Using  concentration of measure results,  we establish the convergence of  such a discretization and  derive convergence rates under natural regularity assumptions.
\end{abstract}

\section{Introduction}

To construct a discrete frame it is common practice to first construct a frame on a continuous parameter (measure) space, where the mathematical properties are nice and the structure is rich, and then obtain a discrete frame by carefully selecting a discrete subset of parameters.
The discretization problem has been studied extensively and in several settings \cite{Fornasier2005,Coulhon2012,FG1,FREEMAN2019784}.
Although theoretically relevant in establishing density theorems and complete characterizations, some general discretization procedures require assumptions which might be difficult to confirm, and they are often not constructive.
On the other hand, discrete parameter selection in constructive discretization methods is usually sensitive,
and it does not generalize trivially from the Euclidean space to more general geometries \cite{CR1}.
In the following, we propose a different approach based on random sampling. 

First, we consider a general domain with an associated positive definite kernel, and exploit 
the theory of reproducing kernel Hilbert spaces (RKHS) and corresponding integral operators  to  define a family of continuous wavelets. Here, we borrow ideas from \cite{blanch,grib,Coulhon2012}, and develop them using the properties of an RKHS.  Then, we propose a stochastic discretization approach which replaces constructive discrete parameter selection with random sampling. We note that the corresponding  discrete frame is actually only finite-dimensional. In particular, it does not solve the discretization problem in the classical sense.  Indeed, each discrete frame is a frame only up to a certain sampling resolution. The frame dimensions increase with the number of samples,  and our main 
technical contribution is proving convergence to the continuous frame as the number of samples goes to infinity. Further, we derive convergence rates under natural regularity assumptions \cite{Feichtinger2016}.
Our analysis combines classical tools from approximation theory and spectral calculus with results of concentration of measure to deal with random sampling. 

The paper is organized as follows.
In Section \ref{sec:setting} we introduce the general framework in which we will conduct our analysis.
This will allow us to state the continuous and discrete settings within a unified formalism.
The key ingredient is a reproducing kernel and the associated integral operator.
Section \ref{sec:frame} contains the general frame construction based on spectral filters and eigenfunctions of the integral operator.
In Section \ref{sec:spectral} we consider a specific class of filters, stemming from the theory of inverse problems, for which we will derive explicit approximation rates.
In Section \ref{sec:consistency} we specialize the construction of Section \ref{sec:frame} in a continuous and a discrete frame,
regarding the latter as a Monte Carlo estimate of the former.
Our main result, Theorem \ref{thm:main}, establishes quantitative consistency of the Monte Carlo estimate on a class of Sobolev-regular signals.
In Section \ref{sec:numeric} we provide an implementable formula to compute our Monte Carlo wavelets from the eigendecomposition of a sample kernel matrix.

\begin{table}[H]
\caption*{Notation} \vspace{-10pt}
\label{tab:notation}
\centering
\small{
\ra{1.5}
\resizebox{0.66\columnwidth}{!}{
\begin{tabular}{l l}

symbol & definition \\

\hline

$\langle \cdot , \cdot \rangle$ & inner product in a Hilbert space \\

$\|\cdot\|$ & norm of a Hilbert space, or the operator norm \\

$\|\cdot\|_\HS$ & Hilbert-Schmidt norm \\

$\spn\{S\}$ & linear span of the set $S$ \\

$\overline{S}$ & topological closure of the set $S$ \\

$P_S$ & orthogonal projection onto the closed subspace $S$ \\

$\supp(\rho)$ & support of the measure $\rho$ \\

$\delta_x$ & Dirac measure at $x$ \\

$X \lesssim Y$ & $ X \le C Y $ for some constant $C>0$ \\

$ \Vu[i] $ & $i$-th component of the vector $ \Vu \in \R^N $ \\

$ \VM[{i,j}] $ & $(i,j)$-th component of the matrix $ \VM \in \R^{N\times N} $ \\

\hline

\end{tabular}
}
}
\end{table}

\section{Setting} \label{sec:setting}

Let $\xx$ be a locally compact, second countable topological space endowed with a Borel probability measure $\rho$.
We fix a continuous positive semi-definite kernel
\begin{equation} \label{eq:K}
K : \xx \times \xx \to \C ,
\end{equation}
and let $\hh$ be the corresponding reproducing kernel Hilbert space
\begin{equation} \label{eq:H}
 \hh = \{ f : \xx \to \C : f(x) = \langle f , K_x \rangle \} = \overline{\spn} \{ K_x \}_{x\in\xx} ,
\end{equation}
with $ K_x = K(\cdot,x) $ and inner product given by $ \langle K_x , K_y \rangle = K(x,y) $.
Assuming $ \sup_{x\in X} K(x,x) \le \ka^2 $,
we define the integral operator $ T : \hh \to \hh $
\begin{equation} \label{eq:T}
 Tf(x) = \int_\xx K(x,y) f(y) d\rho(y) .
\end{equation}
Under these assumptions, $T$ is a positive self-adjoint trace class operator with spectrum $ \sigma(T) \subset [0,\ka^2] $.
Let
\begin{equation*}
 T v_i = \begin{cases}
              \la_i v_i & i\in I_\rho \\
              \hspace{1pt} 0 \ v_i & i\in I_0
             \end{cases}
\end{equation*}
be the spectral decomposition of $T$ over the orthonormal basis $\{v_i\}_{i\in I_\rho\cup I_0}$.
The subspace
\begin{equation} \label{eq:Hrho}
 \hh_\rho = \overline{\spn} \{ K_x \}_{x \in \supp(\rho)} = \overline{\spn} \{ v_i \}_{i\in I_\rho}
\end{equation}
splits $\hh$ as $ \hh = \hh_\rho \oplus \ker T $.

As examples of this setting, we can think of $\xx$ as $\R^d$, or non-Euclidean domains such as a compact connected Riemannian manifold or a weighted graph.
In all these cases, we can take $K$ as the heat kernel associated with the proper notion of Laplacian, be it the Laplace-Beltrami operator or the graph Laplacian.

\section{Frame construction} \label{sec:frame}
 
To construct our frame, we take inspiration from \cite{blanch} and references therein.
 We start by defining filters on the spectrum of $T$.
 Let $ (G_\jay)_{\jay\in\N} $ be a family of bounded measurable functions $ G_\jay : [0,\infty) \to [0,\infty) $
 satisfying
 \begin{align} \label{eq:G}
  \sum_{\jay\in\N} \la G_\jay(\la)^2 = 1 \qquad \la \in (0,\ka^2] .
 \end{align}
$G_\jay(T)$ is then a positive, self-adjoint operator on $\hh$
with $ \sigma(G_\jay(T)) = G_\jay(\sigma(T)) \\ \subset [ 0 , G_\jay(\ka^2) ] $, for every $\jay$.
We can thus define
\begin{equation} \label{eq:frame}
 \psi_{\jay,x} = G_\jay(T) K_x \qquad \jay \in \N , x \in \xx .
\end{equation}
Note that, if $ x \in \supp(\rho) $, then $ \psi_{\jay,x} \in \hh_\rho $ since $ G_\jay(T) \hh_\rho \subset \hh_\rho $.
Moreover, using self-adjointness of $G_\jay(T)$, the reproducing property \eqref{eq:H}, and expression \eqref{eq:Hrho},  we can rewrite $ \psi_{\jay,x} $ as
\begin{equation} \label{eq:wavelet}
 \psi_{\jay,x} = \sum_{i\in I_\rho} G_\jay(\la_i) \overline{v_i(x)} v_i .
\end{equation}
This reveals \eqref{eq:frame} as a generalization of classical wavelets,
where $v_i$'s correspond to eigenfunctions of the Laplacian.
We may therefore interpret $x$ and $\jay$ as location and scale parameters, respectively.

The following proposition shows that the family defined through \eqref{eq:frame} is a Parseval frame on $\hh_\rho$.
 \begin{prop} \label{prop:frame}
 For every $ f \in \hh $,
  $$
  \| P_{\hh_\rho} f \|^2 = \sum_{\jay\in\N} \int_\xx | \langle f , \psi_{\jay,x} \rangle |^2 d\rho(x) .
  $$
 \end{prop}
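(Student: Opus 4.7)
The plan is to reduce the frame identity to a spectral-calculus identity for $T$, using the fact that the inclusion $J : \hh \to L^2(\xx,\rho)$ satisfies $T = J^* J$.

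First I would rewrite the frame coefficients. By self-adjointness of $G_\jay(T)$ and the reproducing property, for any $f\in\hh$,
\begin{equation*}
\langle f , \psi_{\jay,x} \rangle = \langle f , G_\jay(T) K_x \rangle = \langle G_\jay(T) f , K_x \rangle = (G_\jay(T) f)(x) .
\end{equation*}
Since $\sup_{x} K(x,x)\le\ka^2$, every element of $\hh$ is bounded, so the inclusion $J:\hh\to L^2(\xx,\rho)$ is well-defined and bounded. A direct computation with $K_x$ shows $J^* J = T$. Therefore
\begin{equation*}
\int_\xx | \langle f , \psi_{\jay,x} \rangle |^2 \, d\rho(x) = \| J G_\jay(T) f \|_{L^2(\rho)}^2 = \langle T G_\jay(T)^2 f , f \rangle .
\end{equation*}

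Next I would sum over $\jay$. Since all integrands are nonnegative, monotone convergence allows swapping the sum and the integral (equivalently, the sum and the inner product), giving
\begin{equation*}
\sum_{\jay\in\N} \int_\xx | \langle f , \psi_{\jay,x} \rangle |^2 \, d\rho(x) = \Big\langle \Big( \sum_{\jay\in\N} T \, G_\jay(T)^2 \Big) f , f \Big\rangle .
\end{equation*}
The normalization \eqref{eq:G} says that the scalar function $ \la \mapsto \sum_\jay \la \, G_\jay(\la)^2 $ equals $1$ on $(0,\ka^2]$ and $0$ at $\la=0$. By the spectral theorem applied to the decomposition of $T$ introduced in Section \ref{sec:setting}, this operator acts as the identity on each eigenvector $v_i$ with $i\in I_\rho$ and as zero on each $v_i$ with $i\in I_0$. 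In view of the decomposition $\hh = \hh_\rho \oplus \ker T$ in \eqref{eq:Hrho}, this operator is exactly $P_{\hh_\rho}$.

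Substituting yields $\langle P_{\hh_\rho} f , f \rangle = \| P_{\hh_\rho} f \|^2$, which is the claim. The main technical point is really just identifying $T$ with $J^* J$ and then reading off the Parseval identity from \eqref{eq:G} via spectral calculus; the rest is essentially bookkeeping, with the only nontrivial care needed to justify the sum-integral interchange (handled by positivity) and to match the kernel of $T$ with the orthogonal complement of $\hh_\rho$.
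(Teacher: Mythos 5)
Your proof is correct and follows essentially the same route as the paper's: both reduce the scale-$\jay$ integral to $\langle T G_\jay(T)^2 f , f \rangle$ (you via the factorization $T = J^*J$, the paper via a direct computation with $K_x$) and then invoke the normalization \eqref{eq:G} to identify the sum over $\jay$ with $\| P_{\hh_\rho} f \|^2$. The only difference is presentational: the paper verifies the identity on the dense span of the eigenvectors and extends by density, whereas you treat arbitrary $f \in \hh$ directly, justifying the sum--integral interchange by positivity and recognizing $\sum_{\jay} T G_\jay(T)^2 = P_{\hh_\rho}$ by spectral calculus.
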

 \begin{proof}
 Let $ f \in \spn \{v_i\}_{i\in I_\rho\cup I_0} $.
 Since $G_\jay(T)$ is self-adjoint, we have $ \langle f , \psi_{\jay,x} \rangle = \langle G_\jay(T) f , K_x \rangle $.
 Hence, integrating over $\xx$ we get
 \begin{align*}
  \int_\xx | \langle f , \psi_{\jay,x} \rangle |^2 d\rho(x) & = \langle T G_\jay(T) f , G_\jay(T) f \rangle \\
  & =\langle T G_\jay(T)^2 f , f \rangle \\
  & = \sum_{i\in I_\rho} \la_i G_\jay(\la_i)^2 | \langle f , v_i \rangle |^2 .
 \end{align*}
 Summation over $\N$ along with \eqref{eq:G} thus gives
 \begin{align*}
  \sum_{\jay\in\N}  \langle G_\jay(T)^2 T f , f \rangle = \sum_{i\in I_\rho} | \langle f , v_i \rangle |^2 \sum_{\jay\in\N} \la_i G_\jay(\la_i)^2
  = \sum_{i\in I_\rho} | \langle f , v_i \rangle |^2 = \| P_{\hh_\rho} f \|^2 .
 \end{align*}
The assertion follows since $ \spn \{v_i\}_{i\in I_\rho\cup I_0} $ is dense in $\hh$.
 \end{proof}

\section{Examples of spectral filters} \label{sec:spectral}

Let $ (g_\jay)_{\jay\in\N} $ be a family of functions $ g_\jay : [0,\infty) \to [0,\infty) $ such that
\begin{equation} \label{eq:g}
 0 \le g_\jay \le g_{\jay+1} , \qquad \lim_{\jay\to\infty} \la g_\jay(\la) = 1 .
\end{equation}
We can define filters obeying \eqref{eq:G} by means of \eqref{eq:g}, as
\begin{equation*}
 G_0(\la) = \sqrt{g_0(\la)} , \qquad G_{\jay+1} (\la) = \sqrt{ g_{\jay+1}(\la) - g_{\jay}(\la) } .
\end{equation*}

The \emph{qualification} of a spectral function $g_\jay(\la)$ is a constant $ \overline{\nu} \in (0,\infty] $ such that, for all $\nu>0$,
\begin{equation*}
 \sup_{\la\in(0,\ka^2]} \la^\nu (1 - \la g_\jay(\la)) \lesssim \jay^{-\min\{\nu,\overline{\nu}\}} .
\end{equation*}
As instances of $g_\jay(\la)$, we can take spectral functions in Table \ref{tab:g} (see also \cite{MR}).

\begin{table}[H]
\caption{} \vspace{-10pt}
\label{tab:g}
\centering
\small{
\ra{1.5}

\begin{tabular}{l c c}

\multicolumn{1}{c}{method} & $g_\jay(\lambda)$ & qualification \\

\hline

Tikhonov regularization & $\frac{1}{\la + 1/\jay} $ & $1$ \\

Iterated Tikhonov & $ \frac{ (\la + 1/\jay)^m - (1/\jay)^m }{\la(\la + 1/\jay)^m } $ & $m$ \\

Landweber iteration & $ \frac{1}{\la} ( 1 - (1 - \gamma \la)^{\jay} ) $ & $\infty$ \\

Asymptotic regularization & $ \frac{1}{\la} ( 1 - \exp(- \jay \la) ) $ & $\infty$ \\

\hline

\end{tabular}
}
\end{table}

\noindent
These functions play a fundamental role in ill-posed inverse problems regularization,
where the error of a given regularizer decays with a rate governed by its qualification.

\section{Consistency} \label{sec:consistency}

From now on, assume $ \supp{\rho} = \xx $,
whence $ \hh_\rho = \hh $ by \eqref{eq:Hrho}.
Therefore, \eqref{eq:frame} is a frame on the whole space $\hh$, thanks to Proposition \ref{prop:frame}.
Suppose we are given $N$ independent and identically distributed samples
\begin{equation} \label{eq:samples}
 x_1,\dots,x_N \sim \rho .
 \end{equation}
Then we can consider the empirical distribution
\begin{equation*}
\hrho_N = \frac{1}{N} \sum_{i=1}^N \delta_{x_i} ,
\end{equation*}
define the empirical integral operator
\begin{equation} \label{eq:That}
 \hT f (x) = \frac{1}{N} \sum_{i=1}^N K(x,x_i) f(x_i) ,
\end{equation}
and repeat the construction of Section \ref{sec:frame}, to get
\begin{equation} \label{eq:framehat}
 \hpsi_{\jay,x_i} = G_\jay(\hT) K_{x_i} \qquad \jay \in \N , i = 1,\dots,N .
\end{equation}
In view of Proposition \ref{prop:frame}, \eqref{eq:framehat} defines a discrete Parseval frame on
\begin{equation*}
 \hhh_N = \hh_{\hrho_N} = \spn\{ K_{x_i} \}_{i=1}^N \simeq \R^N .
\end{equation*}
Since \eqref{eq:That} is a Monte Carlo estimate of \eqref{eq:T},
we call \eqref{eq:framehat} a family of \emph{Monte Carlo wavelets}.
Note that, as we take more and more samples, we obtain a sequence of discrete Parseval frames
on a sequence of nested subspaces of increasing dimension:
\begin{equation*}
  \hhh_N \subset \hhh_{N+1} \subset \dots \subset \hh .
\end{equation*}
Our goal is to establish consistency of the Monte Carlo estimate \eqref{eq:framehat},
that is, to see whether, and in what sense, \eqref{eq:framehat} tends to \eqref{eq:frame} as $ N \to \infty $.

Now, let
\begin{equation} \label{eq:Tom}
 T_\jay f = \int_\xx \langle f , \psi_{\jay,x} \rangle \psi_{\jay,x} d\rho(x), \quad \hT_\jay f = \frac{1}{N} \sum_{i=1}^N \langle f , \hpsi_{\jay,x_i} \rangle \hpsi_{\jay,x_i}
\end{equation}
be the frame operator at scale $\jay$ and its empirical counterpart.
Proposition \ref{prop:frame} gives a resolution of the identity,
which can be approximated by a truncated empirical version:
\begin{equation} \label{eq:id}
 I_\hh = \sum_{\jay\in\N} T_\jay \approx \sum_{\jay\le\tau} \hT_\jay .
\end{equation}
We split the resolution error of \eqref{eq:id} on a signal $f\in\hh$ into the approximation and the sampling error,
\begin{equation} \label{eq:split}
 \| f - \sum_{\jay\le\tau} \hT_\jay f \| \le \| \sum_{\jay>\tau} T_\jay f \| + \| \sum_{\jay\le\tau} (T_\jay - \hT_\jay ) f \| ,
\end{equation}
and derive quantitative bounds for each term (Propositions \ref{prop:approximation} and \ref{prop:sampling}).
Tuning the resolution $\tau$ in terms of the sample size $N$
will then yield our consistency result in Theorem \ref{thm:main}.

For what concerns the approximation error, note that
Proposition \ref{prop:frame} already implies
\begin{equation*}
\| \sum_{\jay>\tau} T_\jay f \| \xrightarrow{\tau\to\infty}0
\end{equation*}
by dominated convergence.
However, in order to obtain a convergence rate,
we need to assume some regularity of the signal $f$.
Specifically, we will assume the following Sobolev smoothness condition \cite{Feichtinger2016}:
\begin{equation*}
  f = T^\al h \qquad \text{for some } h \in \hh, \al > 0 .
 \end{equation*}

\begin{prop}[Approximation error] \label{prop:approximation}
Let $ g_\jay(\la) $ as in \eqref{eq:g}.
Suppose $ g_j(\la) $ has qualification $ \overline{\nu} $.
 Then, for every $ f \in T^\al\hh $,
 $$
  \| \sum_{\jay>\tau} T_\jay f \| \lesssim \|T^{-\al}f\| \ka^{2(\al-\be)} \tau^{-\be} ,
  $$
 where $ \be = \min\{\al,\overline{\nu}\} $.
\end{prop}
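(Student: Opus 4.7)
The plan is to express the tail $\sum_{\jay>\tau} T_\jay$ as a single spectral function of $T$ and then reduce the norm bound to a supremum of the form $\sup_\la \la^\al|1-\la g_\tau(\la)|$, which is exactly the object controlled by the qualification.

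Reading off from the proof of Proposition \ref{prop:frame}, in the eigenbasis $\{v_i\}_{i\in I_\rho}$ one has $T_\jay f = \sum_{i\in I_\rho}\la_i G_\jay(\la_i)^2\langle f,v_i\rangle v_i$. The telescoping definition $G_0^2=g_0$, $G_{\jay+1}^2=g_{\jay+1}-g_\jay$ yields $\sum_{\jay\le\tau} G_\jay(\la)^2 = g_\tau(\la)$, so combined with \eqref{eq:G} we get $\sum_{\jay>\tau}\la G_\jay(\la)^2 = 1-\la g_\tau(\la)$ on $(0,\ka^2]$. Therefore
$$\sum_{\jay>\tau}T_\jay f = \sum_{i\in I_\rho}\bigl(1-\la_i g_\tau(\la_i)\bigr)\langle f,v_i\rangle v_i.$$
Plugging in the source condition $f=T^\al h$ with $h=T^{-\al}f\in\hh_\rho$ (so $\langle f,v_i\rangle = \la_i^\al\langle h,v_i\rangle$) and applying Parseval in the eigenbasis,
$$\Bigl\|\sum_{\jay>\tau}T_\jay f\Bigr\| \le \sup_{\la\in(0,\ka^2]}\la^\al|1-\la g_\tau(\la)|\cdot\|T^{-\al}f\|.$$

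To close, I would invoke the qualification of $g_\jay$ in two sub-cases determined by $\be=\min\{\al,\overline{\nu}\}$. If $\al\le\overline{\nu}$, the qualification bound with $\nu=\al$ directly gives the supremum $\lesssim \tau^{-\al}=\tau^{-\be}$, and the prefactor $\ka^{2(\al-\be)}=1$ is trivial. If $\al>\overline{\nu}$, I would factor $\la^\al = \la^{\overline{\nu}}\cdot\la^{\al-\overline{\nu}}$, use $\la\le\ka^2$ to bound $\la^{\al-\overline{\nu}}\le\ka^{2(\al-\overline{\nu})}$, and then apply qualification at $\nu=\overline{\nu}$ to obtain the supremum $\lesssim\ka^{2(\al-\be)}\tau^{-\be}$. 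The two cases merge into the stated rate.

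The only genuine point to check is the legitimacy of exchanging summation over $\jay$ with the operator action in the first step; this is immediate from monotone convergence applied to the non-negative partial sums $\sum_{\jay\le\tau}\la_i G_\jay(\la_i)^2 = \la_i g_\tau(\la_i)$, which are non-decreasing in $\tau$ and uniformly bounded on $\sigma(T)\subset[0,\ka^2]$ (they converge to $1$ pointwise by \eqref{eq:g}). Everything else is routine spectral calculus.
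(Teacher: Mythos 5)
Your proposal is correct and follows essentially the same route as the paper: identify $\sum_{\jay\le\tau}T_\jay=Tg_\tau(T)$ (you rederive Lemma \ref{lem:Tom} by telescoping, the paper cites it), pass to the eigenbasis, and invoke the qualification bound on $\sup_\la \la^{\nu}(1-\la g_\tau(\la))$. The only cosmetic difference is that the paper absorbs the source condition into $T^{-\be}f$ and takes the supremum of $\la^{\be}(1-\la g_\tau(\la))$, whereas you keep $T^{-\al}f$ and extract the factor $\ka^{2(\al-\be)}$ from $\la^{\al-\be}\le\ka^{2(\al-\be)}$ explicitly in the case $\al>\overline{\nu}$; both yield the stated bound.
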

\begin{proof}
 By Lemma \ref{lem:Tom} we have $ \sum_{\jay\le\tau} T_\jay = T g_\tau(T) $,
 hence
\begin{align*}
  \| \sum_{\jay>\tau} T_\jay f \|^2 & = \sum_i | 1 - \la_i g_\tau(\la_i) |^2 | \langle f , v_i \rangle |^2 \\
  & = \sum_i \la_i^{2\be} | 1 - \la_i g_\tau(\la_i) |^2 | \langle T^{-\be}f , v_i \rangle |^2 .
\end{align*}
Thus,
\begin{align*}
\| \sum_{\jay>\tau} T_\jay f \| & \le \| T^{-\be} f \| \sup_i \la_i^\be (1 - \la_i g_\tau(\la_i))
\lesssim  \| T^{-\be} f \| \tau^{-\be} . \qedhere
\end{align*}
\end{proof}

Bound on the sampling error follows by concentration of the empirical integral operator $\widehat T$ on the continuous operator  $T$.
\begin{prop}[Sampling error] \label{prop:sampling}
Let $ g_j(\la) $ be as in \eqref{eq:g}.
Suppose $ \la \mapsto \la g_j(\la) $ is Lipschitz continuous on $ [0,\ka^2] $ with Lipschitz constant bounded by $j$.
Then, for every $ f \in \hh $,  with probability higher than $ 1 - 2 e^{-t} $,
 $$
  \| \sum_{\jay\le\tau} (T_\jay - \hT_\jay ) f \| \lesssim \|f\| \ka^2 \sqrt{t} \tau N^{-1/2} .
 $$
\end{prop}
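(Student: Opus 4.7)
The plan is to rewrite the partial sums of frame operators as a single function of $T$ and $\hT$ via the functional calculus, then bound the difference by a Lipschitz-type inequality in the Hilbert--Schmidt norm, and finally invoke concentration of $\hT$ around $T$. Exactly as in the previous proof, Lemma \ref{lem:Tom} gives $\sum_{\jay\le\tau} T_\jay = T g_\tau(T)$, and the same identity applied verbatim to the empirical operator yields $\sum_{\jay\le\tau} \hT_\jay = \hT g_\tau(\hT)$. Setting $h_\tau(\la) = \la g_\tau(\la)$, the quantity to estimate reduces to $\| h_\tau(T) f - h_\tau(\hT) f \| \le \| h_\tau(T) - h_\tau(\hT) \| \, \|f\|$.

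The next step is a Lipschitz functional calculus bound. Since $h_\tau$ is Lipschitz on $[0,\ka^2]$ with constant at most $\tau$ by assumption, I would pass through the Hilbert--Schmidt norm and use
\[
 \| h_\tau(T) - h_\tau(\hT) \| \le \| h_\tau(T) - h_\tau(\hT) \|_\HS \le \tau \, \| T - \hT \|_\HS .
\]
The second inequality is a short computation: picking ONBs of eigenvectors of $T$ and $\hT$ (both compact self-adjoint), expand $\| h_\tau(T) - h_\tau(\hT) \|_\HS^2 = \sum_{i,j} |h_\tau(\la_i) - h_\tau(\hla_j)|^2 |\langle v_i , \hv_j \rangle|^2$, bound $|h_\tau(\la_i) - h_\tau(\hla_j)| \le \tau |\la_i - \hla_j|$, and recognize $\sum_{i,j} |\la_i - \hla_j|^2 |\langle v_i , \hv_j \rangle|^2 = \|T - \hT\|_\HS^2$ by orthonormality of the two bases.

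It remains to control $\| T - \hT \|_\HS$. Since $\hT$ is the empirical mean of the i.i.d.\ rank-one Hilbert--Schmidt operators $K_{x_i} \otimes K_{x_i}$, each with HS norm almost surely bounded by $\ka^2$ and with mean $T$, a standard Bernstein-type inequality for Hilbert space-valued random variables gives $\| T - \hT \|_\HS \lesssim \ka^2 \sqrt{t/N}$ with probability at least $1 - 2 e^{-t}$. Concatenating the three estimates yields the advertised bound. The main obstacle is really the probabilistic input: the Lipschitz functional calculus step is entirely routine in Hilbert--Schmidt norm, while the concentration estimate must either be cited from the empirical-operator literature on RKHS or derived along the lines of the vector-valued Bernstein inequality, with the dependence on $\ka$ carefully tracked.
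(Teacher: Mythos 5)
Your proposal is correct and follows essentially the same route as the paper's proof: reduce the partial frame sums to $T g_\tau(T)$ and $\hT g_\tau(\hT)$ via Lemma \ref{lem:Tom}, pass to the Hilbert--Schmidt norm with the Lipschitz functional calculus (which is exactly Lemma \ref{lem:Lip}, whose proof you reproduce), and conclude with concentration of $\hT$ around $T$ (the paper cites Lemma \ref{lem:T-That} from \cite{RBD10} for the Bernstein-type bound you describe).
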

\begin{proof}
 Using Lemma \ref{lem:Lip} we have
 \begin{align*}
  \| \sum_{\jay\le\tau} (T_\jay - \hT_\jay ) f \| & \le \| T g_\tau(T) - \hT g_\tau(\hT) \| \|f\| \\
  & \le \| T g_\tau(T) - \hT g_\tau(\hT) \|_\HS \|f\| \\
  & \le \tau \| T - \hT \|_\HS \|f\| .
 \end{align*}
Thanks to the reproducing property \eqref{eq:H}, we can write
$$
 T = \int_\xx K_x \otimes K_x d\rho(x) , \qquad \hT = \frac{1}{N} \sum_{i=1}^N K_{x_i} \otimes K_{x_i} .
$$
This allows for the concentration in Lemma \ref{lem:T-That}, which yields
 $$
 \| T - \hT \|_\HS \lesssim \ka^2 \sqrt{t} N^{-1/2}
 $$
 with probability higher than $ 1 - 2 e^{-t} $ .
\end{proof}

We are finally ready to state the consistency of our Monte Carlo discretization.

\begin{thm} \label{thm:main}
 Let $ g_\jay(\la) $ as in \eqref{eq:g}.
 Suppose $ g_j(\la) $ has qualification $ \overline{\nu} $,
 and $ \la \mapsto \la g_j(\la) $ is Lipschitz continuous on $ [0,\ka^2] $ with Lipschitz constant bounded by $j$.
 Then, for every $ f \in T^\al\hh $,
   with probability higher than $ 1 - 2 e^{-t} $,
 \begin{align*}
 \| f - \hspace{-10pt} \sum_{\jay\le N^{\frac{1}{2\be+2}}} \sum_{i=1}^N &\langle f , \hpsi_{\jay,x_i} \rangle \hpsi_{\jay,x_i} \|
  \lesssim  \|T^{-\al}f\| ( \ka^{2(\al-\be)} + \ka^{2\al+2} \sqrt{t} ) N^{-\frac{\be}{2\be+2}} ,
 \end{align*}
where $ \be = \min\{\al,\overline{\nu}\} $.
\end{thm}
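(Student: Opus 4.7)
The theorem concerns the truncated empirical resolution of the identity
$\sum_{\jay\le\tau} \hT_\jay f$ with $\tau$ chosen in terms of $N$, so the plan is to invoke the decomposition \eqref{eq:split}, plug in the two bounds we already have, and optimize in $\tau$.

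First, I would apply Proposition \ref{prop:approximation} to the approximation term, obtaining
$\|\sum_{\jay>\tau} T_\jay f\| \lesssim \|T^{-\al}f\| \ka^{2(\al-\be)} \tau^{-\be}$, and Proposition \ref{prop:sampling} to the sampling term, obtaining (with probability at least $1-2e^{-t}$)
$\|\sum_{\jay\le\tau} (T_\jay - \hT_\jay) f\| \lesssim \|f\| \ka^2 \sqrt{t}\, \tau N^{-1/2}$. Both hypotheses of the theorem (qualification $\overline{\nu}$, Lipschitz control on $\la\mapsto\la g_j(\la)$) are precisely what is needed to activate those propositions.

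Next, to put everything in a single norm, I would absorb $\|f\|$ into $\|T^{-\al}f\|$ using the source condition $f=T^\al h$: since $\sigma(T)\subset [0,\ka^2]$, one has $\|f\|=\|T^\al h\|\le \ka^{2\al}\|h\| = \ka^{2\al}\|T^{-\al}f\|$, giving a sampling-error bound of the form $\ka^{2\al+2}\sqrt{t}\,\tau N^{-1/2}\|T^{-\al}f\|$. The two contributions are then
\begin{equation*}
\|T^{-\al}f\|\bigl( \ka^{2(\al-\be)} \tau^{-\be} + \ka^{2\al+2}\sqrt{t}\,\tau N^{-1/2} \bigr).
\end{equation*}

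Finally, I would balance the two terms. Setting $\tau^{-\be} \sim \tau N^{-1/2}$ gives $\tau^{\be+1}\sim N^{1/2}$, i.e.\ the announced choice $\tau = N^{1/(2\be+2)}$; both terms then collapse to $N^{-\be/(2\be+2)}$, yielding the claimed estimate. There is no real obstacle: once the two propositions and the source-condition bound $\|f\|\le \ka^{2\al}\|T^{-\al}f\|$ are in place, the argument is just a triangle inequality followed by a one-line optimization in $\tau$. The only point deserving care is checking that the chosen $\tau$ need not be an integer (one takes its integer part, which changes constants but not the rate), and that the failure probability $2e^{-t}$ from Proposition \ref{prop:sampling} carries through unchanged since the approximation bound is deterministic.
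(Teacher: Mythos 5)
Your proposal is correct and follows essentially the same route as the paper: split the error via \eqref{eq:split}, apply Propositions \ref{prop:approximation} and \ref{prop:sampling}, and balance with $\tau = N^{\frac{1}{2\be+2}}$. The step you spell out explicitly --- absorbing $\|f\|$ into $\|T^{-\al}f\|$ via $\|f\| \le \ka^{2\al}\|T^{-\al}f\|$ to obtain the $\ka^{2\al+2}$ factor --- is left implicit in the paper but is exactly the right justification.
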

\begin{proof}
 We split $ \| f - \sum_{\jay\le\tau} \hT_\jay f \| $ in \eqref{eq:split},
 and combine the bounds in Propositions \ref{prop:approximation} and \ref{prop:sampling}.
In order to balance the two terms, we choose $ \tau = N^{\frac{1}{2\be+2}} $, which concludes the proof.
\end{proof}

In particular, in view of Lemma \ref{lem:lip}, we can use the spectral filters defined in Table \ref{tab:g},
for which we obtain the following rates:

\begin{table}[th]
\caption{} \vspace{-10pt}
\label{tab:rates}
\centering
\small{
\ra{1.5}

\begin{tabular}{l l}

\multicolumn{1}{c}{method} & \multicolumn{1}{c}{rate} \\

\hline

Tikhonov regularization & $ N^{-\frac{\min\{\al,1\}}{2\min\{\al,1\}+2}} $ \\

Iterated Tikhonov & $ N^{-\frac{\min\{\al,m\}}{2\min\{\al,m\}+2}}  $ \\

Landweber iteration & $ N^{-\frac{\al}{2\al+2}} $ \\

Asymptotic regularization & $ N^{-\frac{\al}{2\al+2}}$ \\

\hline

\end{tabular}
}
\end{table}

\section{Numerical implementation} \label{sec:numeric}

To implement the system of Monte Carlo wavelets in \eqref{eq:framehat}
we exploit equation \eqref{eq:wavelet}, that is,
 \begin{equation*} \label{eq:wavelethat}
 \hpsi_{j,x_k} = \frac{1}{N} \sum_{i=1}^N G_j(\hla_i) \hv_i (x_k) \hv_i ,
\end{equation*}
where $ \hT \hv_i = \hla_i \hv_i $ is the spectral decomposition of $\hT$.
In particular, we need to compute eigenvalues and eigenvectors of $\hT$,
translating the calculations from $\hhh$ to $\R^N$ and back \cite{RBD10}.
To this end, we introduce the restriction map $ \hS : \hhh \to \R^N $
\begin{equation}
 (\hS f)[i] = f(x_i) \qquad i = 1,\dots,N ,
\end{equation}
whose adjoint $ \hS^* : \R^N \to \hhh $ defines the out-of-sample extension
\begin{equation*}
 \hS^* \Vu = \frac{1}{N} \sum_{\ell=1}^N \Vu[\ell] K_{x_\ell} .
\end{equation*}
Let
\begin{equation} \label{eq:Khat}
\widehat\VK[i,k] = \frac{1}{N} K(x_i,x_k), \qquad i,k=1,\dots,N
\end{equation}
be the (normalized) sample kernel matrix.
Now $ \hT = \hS^* \hS $ and $ \widehat\VK = \hS \hS^* $,
hence $\hT$ and $\widehat\VK$ share the same eigenvalues.
Moreover, the eigenvectors of $\hT$ are related to the ones of $\widehat\VK$
by the singular value decomposition of $\hS$:
\begin{equation*}
 \hS \hv_i = \sqrt{\hla_i} \hu_i, \qquad i = 1,\dots,N .
\end{equation*}
Thus, we have
\begin{equation*}
 \hv_i = \hla_i^{-\frac{1}{2}} \hS^* \hu_i = \hla_i^{-\frac{1}{2}} \frac{1}{N} \sum_{\ell=1}^N \hu_i[\ell] K_{x_\ell} ,
\end{equation*}
which evaluated at $ x_k $ gives
\begin{align*}
 \hv_i(x_k) = \hla_i^{-\frac{1}{2}} \frac{1}{N} \sum_{\ell=1}^N K(x_k,x_\ell) \hu_i[\ell]
  = \hla_i^{-\frac{1}{2}} (\hK \hu_i)[k] \! = \! \sqrt{\hla_i} \hu_i[k] .
\end{align*}
We therefore obtain
\begin{equation} \label{eq:wavelethat}
 \hpsi_{j,x_k} (x) = \frac{1}{N} \sum_{i,\ell=1}^N G_j(\hla_i) \hu_i[k] \hu_i[\ell] K(x,x_\ell) .
\end{equation}
In summary,
given a kernel \eqref{eq:K}, filters $\eqref{eq:G}$ and samples \eqref{eq:samples},
a numerical implementation of the Monte Carlo wavelets \eqref{eq:framehat}
can be performed from the sample kernel matrix \eqref{eq:Khat} alone
following the steps below:
\begin{enumerate}[label=\arabic*.]
\item compute eigenvalues and eigenvectors $(\hla_i,\hu_i)_{i=1}^N$ of $\hK$;
\item apply equation \eqref{eq:wavelethat}.
\end{enumerate}


\newpage

\section*{Lemmata}

\begin{lem} \label{lem:Tom}
Let $T$, $G_\jay$ and $T_\jay$ be defined as in \eqref{eq:T}, \eqref{eq:G} and \eqref{eq:Tom}.
Then $ T_\jay = T G_\jay(T)^2 $.
\end{lem}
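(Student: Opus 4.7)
The plan is to unwind the definitions of $\psi_{\jay,x}$ and $T_\jay$, rewrite everything as a Bochner integral in $\hh$, and recognize the resulting object as a functional calculus expression in $T$.

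First I would simplify the inner product. Since $\psi_{\jay,x} = G_\jay(T) K_x$ and $G_\jay(T)$ is self-adjoint, the reproducing property \eqref{eq:H} gives
\begin{equation*}
\langle f, \psi_{\jay,x}\rangle = \langle G_\jay(T) f, K_x\rangle = (G_\jay(T) f)(x).
\end{equation*}
Substituting this into the definition \eqref{eq:Tom} of $T_\jay$ yields
\begin{equation*}
T_\jay f = \int_\xx (G_\jay(T) f)(x)\, G_\jay(T) K_x \, d\rho(x) = G_\jay(T) \int_\xx (G_\jay(T) f)(x)\, K_x \, d\rho(x),
\end{equation*}
where $G_\jay(T)$ has been pulled outside the Bochner integral by its boundedness and linearity.

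Next I would identify the remaining integral. For any $h \in \hh$, one has the $\hh$-valued identity $\int_\xx h(x) K_x \, d\rho(x) = Th$, which follows by pairing both sides against an arbitrary $K_y$ and invoking the reproducing property to recover the defining formula \eqref{eq:T}. Applying this with $h = G_\jay(T) f$ gives
\begin{equation*}
T_\jay f = G_\jay(T) \, T \, G_\jay(T) f = T \, G_\jay(T)^2 f,
\end{equation*}
where the last equality uses that $T$ commutes with $G_\jay(T)$ via the functional calculus.

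I do not expect any real obstacle here; the only mildly delicate point is justifying the interchange of $G_\jay(T)$ with the vector-valued integral, which is routine since $G_\jay(T)$ is bounded and the integrand $x \mapsto (G_\jay(T) f)(x) K_x$ is Bochner integrable under the standing assumption $\sup_x K(x,x) \le \ka^2$.
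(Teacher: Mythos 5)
Your proof is correct, and it takes a different route from the paper's. The paper evaluates $T_\jay$ on the eigenbasis $\{v_i\}$ of $T$: it shows $\langle T_\jay v_i , v_i \rangle = G_\jay(\la_i)^2 \langle T v_i , v_i \rangle = \langle T G_\jay(T)^2 v_i , v_i \rangle$ and then concludes by density and polarization, so the argument runs through the spectral decomposition. You instead argue directly on an arbitrary $f$: the reproducing property turns the frame coefficient into $(G_\jay(T)f)(x)$, the bounded operator $G_\jay(T)$ is pulled out of the Bochner integral, and the identity $\int_\xx h(x) K_x \, d\rho(x) = Th$ (equivalently, $T = \int_\xx K_x \otimes K_x \, d\rho(x)$, which the paper itself uses later in the proof of Proposition \ref{prop:sampling}) finishes the computation. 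Your version buys a self-contained operator identity with no appeal to the eigenbasis, to density, or to polarization; in particular it sidesteps the fact that the paper's computation only exhibits the diagonal matrix elements $\langle T_\jay v_i , v_i \rangle$, so that recovering the full operator identity from there requires either the off-diagonal entries or a genuine polarization argument on a dense set of vectors. The paper's version, on the other hand, makes the action on each spectral component explicit, which is the form actually used in the numerical implementation of Section \ref{sec:numeric}. The one point you rightly flag, Bochner integrability of $x \mapsto (G_\jay(T)f)(x) K_x$, is indeed routine given $\|K_x\| \le \ka$ and the boundedness of $G_\jay(T)$.
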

\begin{proof}
For every $i\in I$ we have
 \begin{align*}
  T_j v_i & = \int_\xx \langle v_i , G_j(T)K_x \rangle G_j(T)K_x d\rho(x) = G_j(\la_i) \int_\xx \langle v_i , K_x \rangle G_j(T)K_x d\rho(x) .
 \end{align*}
 Thus,
 \begin{align*}
  \langle T_j v_i , v_i \rangle & = G_j(\la_i)^2 \int_\xx \langle v_i , K_x \rangle \langle K_x , v_i \rangle d\rho(x) \\
  & = G_j(\la_i)^2 \langle Tv_i , v_i \rangle \\
  & = \langle T G_j(T)^2 v_i , v_i \rangle .
 \end{align*}
Since $ \overline{\spn}\{v_i\}_{i\in I} = \hh $, the claim follows by polarization.
\end{proof}

\begin{lem} \label{lem:Lip}
 Let  $A,B$ be self-adjoint operators on a Hilbert space $\hh$,
 and let $ F : \R \to \C $ be a Lipschitz continuous function with Lipschitz constant $L$.
 Then
 $$
 \| F(A) - F(B) \|_\HS \le L \| A - B \|_\HS .
 $$
\end{lem}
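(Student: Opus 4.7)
\textbf{Proof plan for Lemma~\ref{lem:Lip}.} The plan is to reduce everything to matrix coefficients with respect to eigenbases of $A$ and $B$, and then read off the inequality from its scalar counterpart. Since in this paper the lemma is applied with $A=Tg_\tau(T)$ and $B=\hT g_\tau(\hT)$, where $T$ is trace class and $\hT$ is finite rank, I will carry out the argument under the assumption that $A$ and $B$ are compact self-adjoint (if $A-B$ is not Hilbert--Schmidt the bound is vacuous, and the general bounded self-adjoint case can be treated the same way by replacing sums with spectral integrals).

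First I would write the spectral decompositions $Ae_i=\mu_i e_i$ and $Bf_j=\nu_j f_j$ with $\{e_i\}$ and $\{f_j\}$ orthonormal bases of $\hh$. Evaluating matrix entries of $F(A)-F(B)$ between these two bases gives
\begin{equation*}
\langle e_i, (F(A)-F(B)) f_j\rangle = (F(\mu_i)-F(\nu_j))\,\langle e_i, f_j\rangle,
\end{equation*}
since $F(A)e_i=F(\mu_i)e_i$ and $F(B)f_j=F(\nu_j)f_j$. Computing the Hilbert--Schmidt norm by summing $\|(F(A)-F(B))f_j\|^2$ over $j$ and expanding in the basis $\{e_i\}$ then yields
\begin{equation*}
\|F(A)-F(B)\|_\HS^2 = \sum_{i,j} |F(\mu_i)-F(\nu_j)|^2\,|\langle e_i,f_j\rangle|^2.
\end{equation*}

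The second step is the same identity applied to the function $F(t)=t$:
\begin{equation*}
\|A-B\|_\HS^2 = \sum_{i,j} |\mu_i-\nu_j|^2\,|\langle e_i,f_j\rangle|^2.
\end{equation*}
With both expansions in hand, the Lipschitz hypothesis $|F(\mu_i)-F(\nu_j)|\le L|\mu_i-\nu_j|$ applied termwise gives
\begin{equation*}
\|F(A)-F(B)\|_\HS^2 \le L^2 \sum_{i,j} |\mu_i-\nu_j|^2\,|\langle e_i,f_j\rangle|^2 = L^2 \|A-B\|_\HS^2,
\end{equation*}
which is the desired inequality.

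The only real subtlety is the passage to the sum, because the $\{e_i\}$ and $\{f_j\}$ are \emph{two different} orthonormal bases; the reason the identity above is clean is that $F(A)$ acts diagonally on the left basis while $F(B)$ acts diagonally on the right basis, so the same scalar factor $\langle e_i,f_j\rangle$ appears in both computations. This is also the step where the argument would need adaptation in the case of continuous spectrum (one must integrate against the product spectral measure $dE_A(\mu)\otimes dE_B(\nu)$ of the joint action on Hilbert--Schmidt operators), but no new ingredient beyond this termwise scalar Lipschitz estimate is required.
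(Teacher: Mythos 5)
Your proposal is correct and follows essentially the same route as the paper's proof: diagonalize $A$ and $B$ in two (generally different) orthonormal eigenbases, expand the Hilbert--Schmidt norm over the mixed matrix coefficients $\langle e_i, f_j\rangle$, and apply the scalar Lipschitz bound termwise, using the same expansion with $F(t)=t$ to recover $\|A-B\|_\HS$. Your added remarks on compactness and the continuous-spectrum case address an implicit assumption that the paper's proof also makes (and which is harmless in the intended application), but the core argument is identical.
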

\begin{proof}
 Let $ \{e_i\}_{i\in I} $ and $ \{f_j\}_{j\in J} $ be orthonormal bases of $\hh$
 such that $ Ae_i = \la_i e_i $ and $ Bf_j = \mu_j f_j $.
 Then
 \begin{align*}
  \| F(A) - F(B) \|_\HS^2
  & = \sum_{i\in I , j\in J} | \langle (F(A) - F(B)) e_i , f_j \rangle |^2 \\
  & = \sum_{i\in I , j\in J} | F(\la_i) - F(\mu_j) |^2 | \langle e_i , f_j \rangle |^2 \\
  & \le L^2 \sum_{i\in I , j\in J} | \la_i - \mu_j |^2 | \langle e_i , f_j \rangle |^2 \\
  & = L^2 \| A - B \|_\HS . \qedhere
 \end{align*}
\end{proof}

\begin{lem}[{\cite[Theorem 7]{RBD10}}] \label{lem:T-That}
 The operators $T$ and $\hT$ defined in \eqref{eq:T} and \eqref{eq:That} are Hilbert-Schmidt and
 $$
  \| T - \hT \|_\HS \le 2\sqrt{2} \ka^2 \sqrt{t} N^{-1/2}
 $$
 with probability higher than $ 1 - 2 e^{-t} $.
\end{lem}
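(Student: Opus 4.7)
The plan is to recognize $T$ and $\hT$ as the population mean and the empirical mean of i.i.d.\ random variables taking values in the Hilbert space of Hilbert--Schmidt operators on $\hh$, and then apply a Hoeffding-type concentration inequality in Hilbert space.

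First I would set $\xi_i := K_{x_i}\otimes K_{x_i}$ for $i=1,\dots,N$. Since a rank-one tensor $v\otimes v$ has Hilbert--Schmidt norm $\|v\|^2$, the bound $K(x,x)\le\ka^2$ immediately gives $\|\xi_i\|_\HS = K(x_i,x_i) \le \ka^2$, which also forces $T$ and $\hT$ to be Hilbert--Schmidt. Using the reproducing property (as already exploited in the proof of Proposition~\ref{prop:sampling}) one checks $\mathbb{E}[\xi_i] = T$ and $\frac{1}{N}\sum_{i=1}^N \xi_i = \hT$. Hence the $\xi_i$ are i.i.d.\ random vectors in the separable Hilbert space of Hilbert--Schmidt operators, with a uniform a.s.\ bound $\|\xi_i\|_\HS \le \ka^2$, and consequently $\|\xi_i - T\|_\HS \le 2\ka^2$ a.s.

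Second I would invoke the Hoeffding inequality for sums of independent, bounded Hilbert-space-valued random variables (Pinelis--Sakhanenko, 1985; see also Pinelis 1994): if $Y_1,\dots,Y_N$ are independent, centered, and satisfy $\|Y_i\|\le M$ almost surely, then
$$
\mathbb{P}\!\left(\Bigl\|\tfrac{1}{N}\sum_{i=1}^N Y_i\Bigr\| \ge \eps \right)
\le 2\exp\!\left(-\frac{N\eps^2}{2M^2}\right).
$$
Applied to $Y_i = \xi_i - T$ with $M = 2\ka^2$, this yields
$$
\mathbb{P}\!\left(\|T-\hT\|_\HS \ge \eps\right) \le 2\exp\!\left(-\frac{N\eps^2}{8\ka^4}\right),
$$
and inverting by setting the right-hand side equal to $2e^{-t}$ gives $\eps = 2\sqrt{2}\,\ka^2\sqrt{t/N}$, which is exactly the claimed bound.

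The argument is essentially routine once the HS-valued structure is identified; the only real subtleties are (i) checking that the i.i.d.\ variables $\xi_i$ indeed take values in a separable Hilbert space so that a vector-valued concentration inequality applies, and (ii) selecting a version of the Hilbert-space Hoeffding bound with the right constant so that the simple bound $\|\xi_i - T\|_\HS \le 2\ka^2$ produces precisely the constant $2\sqrt{2}$. No variance estimate is needed, because the almost-sure bound alone is sharp enough to recover the stated rate.
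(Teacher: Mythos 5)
The paper gives no proof of this lemma at all---it is imported verbatim from Theorem~7 of the cited reference---and your argument is a correct, self-contained reconstruction along exactly the lines of that source: view $T$ and $\hT$ as the mean and empirical mean of the i.i.d.\ Hilbert--Schmidt-valued variables $K_{x_i}\otimes K_{x_i}$, bounded by $\|K_{x_i}\otimes K_{x_i}\|_\HS=K(x_i,x_i)\le\ka^2$, and apply a Pinelis-type Hoeffding inequality in the (separable) Hilbert space of Hilbert--Schmidt operators. The constants check out: with $M=2\ka^2$ and the tail bound $2\exp(-N\eps^2/(2M^2))$ one recovers $\eps=2\sqrt{2}\,\ka^2\sqrt{t}\,N^{-1/2}$ exactly.
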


\begin{lem} \label{lem:lip}
 Let $ g_\jay(\la) $ be as in Table \ref{tab:g}.
 The function $ \la \mapsto \la g_\jay(\la) $ is Lipschitz continuous on $[0,\ka^2]$,
with Lipschitz constant bounded by $\jay$.
\end{lem}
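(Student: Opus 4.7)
The plan is to reduce the Lipschitz bound to a pointwise derivative bound, and then verify the derivative bound case by case for the four filters in Table~\ref{tab:g}. Since each $g_\jay$ is smooth on $(0,\infty)$ and the product $\la\mapsto\la g_\jay(\la)$ extends continuously to $\la=0$ with a well-defined right derivative, it suffices by the mean value theorem to show
\[
\sup_{\la\in[0,\ka^2]}\left|\tfrac{d}{d\la}\bigl(\la g_\jay(\la)\bigr)\right| \lesssim \jay .
\]
So the strategy is: for each entry in Table~\ref{tab:g}, rewrite $\la g_\jay(\la)$ in a form that is smooth at $\la=0$, differentiate, and take the supremum.

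For Tikhonov, $\la g_\jay(\la)=\la/(\la+1/\jay)$, so the derivative is $(1/\jay)/(\la+1/\jay)^2$, which is maximized at $\la=0$ with value exactly $\jay$. For asymptotic regularization, $\la g_\jay(\la)=1-e^{-\jay\la}$, whose derivative $\jay e^{-\jay\la}$ is again maximized at $\la=0$ with value $\jay$. These two cases give the Lipschitz constant $\jay$ directly.

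For iterated Tikhonov, the key simplification is to notice that
\[
\la g_\jay(\la)=\frac{(\la+1/\jay)^m-(1/\jay)^m}{(\la+1/\jay)^m}=1-(1+\jay\la)^{-m},
\]
which is visibly smooth at $\la=0$. Differentiating gives $m\jay(1+\jay\la)^{-m-1}$, bounded by $m\jay\lesssim \jay$ (with the implicit constant depending on the fixed $m$). For Landweber, $\la g_\jay(\la)=1-(1-\gamma\la)^{\jay}$, with derivative $\jay\gamma(1-\gamma\la)^{\jay-1}$; provided $\gamma$ is chosen so that $|1-\gamma\la|\le 1$ on $[0,\ka^2]$ (i.e.\ $0<\gamma\le 2/\ka^2$, the standard stability condition for Landweber iteration), this derivative is bounded by $\gamma\jay\lesssim \jay$.

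There is no genuine obstacle here; the whole content of the lemma is the algebraic rewriting that makes each $\la g_\jay(\la)$ visibly smooth at the origin, so the only mild care required is in the iterated Tikhonov case, where one must telescope the numerator $(\la+1/\jay)^m-(1/\jay)^m$ against the denominator $\la(\la+1/\jay)^m$ to cancel the $\la$ factor before differentiating. Once this rewriting is done, each case collapses to a one-line calculation.
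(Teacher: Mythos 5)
Your proof is correct and follows exactly the route the paper intends: the paper's own proof is the one-line remark that the claim ``follows by a direct computation of the derivative of $\la \mapsto \la g_\jay(\la)$,'' and you have simply carried out that computation for each of the four filters, including the algebraic rewriting at $\la=0$. Your observation that iterated Tikhonov actually yields the constant $m\jay$ (absorbed into $\lesssim$) and that Landweber needs the standard step-size condition on $\gamma$ are accurate refinements of the stated bound, consistent with how the lemma is used downstream.
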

\begin{proof}
The claim follows by a direct computation of the derivative of $ \la \mapsto \la g_\jay(\la) $.
\end{proof}

\bibliographystyle{plain}
\bibliography{biblio}


\end{document}